\title[Topological $G$-manifolds II]{Countable approximation of topological $G$-manifolds, II:  linear Lie groups $G$}
\author[Q Khan]{Qayum Khan}
\address{Department of Mathematics \hfill Indiana University \hfill Bloomington IN 47405 USA} 
\email{qkhan@indiana.edu}
\definecolor{dark-red}{rgb}{0.4,0.15,0.15}
\definecolor{dark-blue}{rgb}{0.15,0.15,0.4}
\definecolor{medium-blue}{rgb}{0,0,0.5}
\newtheorem{thm}{Theorem}[section]
\newtheorem{cor}[thm]{Corollary}
\newtheorem{lem}[thm]{Lemma}
\theoremstyle{definition}
\newtheorem{defn}[thm]{Definition}
\newtheorem{rem}[thm]{Remark}
\newtheorem{exm}[thm]{Example}
\numberwithin{equation}{section}
\DeclareMathAlphabet{\matheurm}{U}{eur}{m}{n}
\newcommand{\C}{\mathbb{C}}
\newcommand{\N}{\mathbb{N}}
\newcommand{\Q}{\mathbb{Q}}
\newcommand{\R}{\mathbb{R}}
\newcommand{\Z}{\mathbb{Z}}
\newcommand{\cA}{\mathcal{A}}
\newcommand{\cC}{\mathcal{C}}
\newcommand{\cI}{\mathcal{I}}
\newcommand{\cM}{\mathcal{M}}
\newcommand{\cP}{\mathcal{P}}
\newcommand{\cT}{\mathcal{T}}
\newcommand{\cU}{\mathcal{U}}
\newcommand{\Cpt}{\mathrm{Cpt}}
\newcommand{\Homeo}{\mathrm{Homeo}}
\newcommand{\id}{\mathrm{id}}
\newcommand{\Isom}{\mathrm{Isom}}
\newcommand{\Map}{\mathrm{Map}}
\newcommand{\Torus}{\mathrm{Torus}}
\newcommand{\eps}{\varepsilon}
\newcommand{\iso}{\cong}
\newcommand{\longra}{\longrightarrow}
\newcommand{\x}{\times}
\newcommand{\ol}[1]{\overline{#1}}
\begin{document}

\begin{abstract}
Let $G$ be a matrix group.
Topological $G$-manifolds with Palais-proper action have the $G$-homotopy type of countable $G$-CW complexes (\ref{cor:main}).
This generalizes E.~Elfving's dissertation theorem for locally linear $G$-manifolds (1996).
Also we improve the Bredon--Floyd theorem from compact groups $G$.
\end{abstract}
\maketitle

\section{Equivariant cohomology manifolds}

\begin{defn}
Let $G$ be a topological group.
Let $X$ be a \textbf{$G$-space}, that is, a topological space equipped with a left $G$-action.
For any $x \in X$, its \textbf{orbit type} $(G_x)$ is the $G$-conjugacy class of its \textbf{isotropy group} $G_x := \{g \in G ~|~ gx=x\} \leqslant G$.
The $G$-space $X$ is \textbf{$G$-metrizable} if it has a $G$-invariant metric: $d(gx,gy)=d(x,y)$.
The group $G$ is \textbf{Lie} if $G$ is a real-analytic manifold with $(a,b) \longmapsto a^{-1} b$ analytic.
\end{defn}

Proper in the sense of Bourbaki is \cite[I:3.17]{tomDieck} and of Palais is \cite[1.2.2]{Palais}.

\begin{defn}
Let $G$ be a topological group.
Let $X$ be a $G$-space.
Define a map
\[
\theta: G \x X \longra X \x X \quad;\quad (g,x) \longra (x,gx).
\]
The $G$-space $X$ is \textbf{Bourbaki-proper} if $\theta$ is proper in the sense of Bourbaki: the product $\theta \x \id_Z$ is a closed function for any topological space $Z$ \cite[10:1.1]{Bourbaki}.

Now suppose $G$ is locally compact and $X$ is completely regular, both without any assumption of Hausdorff.
The $G$-space $X$ is \textbf{Palais(-proper)} if each point $x \in X$ has a neighborhood $U$ such that any $y \in X$ has a neighborhood $V$ with
\[
\langle U, V \rangle_G := \{g \in G ~|~ U \cap gV \neq \varnothing\} 
\]
precompact \cite[1.2.2]{Palais}.
More generally, $X$ is \textbf{Cartan(-proper)} if each point $x \in X$ has a neighborhood $U$ such that $\langle U, U \rangle_G \subset G$ is precompact \cite[1.1.2]{Palais}.
\end{defn}

We recall $\Z$-cohomology manifold \cite[I:3.3]{Borel_seminar}, without separable or metrizable.
Our description uses \v{C}ech cohomology for Alexander--Spanier cohomology~\cite{Dowker4}.

\begin{defn}[Borel]
Let $M$ be a locally compact, Hausdorff topological space.
Let $n \in \N$.
Then $M$ is an \textbf{$n$-dimensional $\Z$-cohomology manifold} ($n\text{-cm}_\Z$) if:
\begin{enumerate}
\item $\dim_\Z(M) \leqslant n$, that is, $\check{H}^n(M;\Z) \longra \check{H}^n(A;\Z)$ is onto for all closed $A \subset M$

\item $\forall x \in M$: local Betti numbers $\beta^i_\Z(M,x)=0 \; \forall i<n$ and $\beta^n_\Z(M,x)=1$ in the sense of Borel~\cite[I:2.1]{Borel_seminar} extending Aleksandrov (1935) and \v{C}ech (1934)

\item $\forall x \in M$: a local $\Z$-orientation of $M$ at $x$ exists in the sense of \cite[I:3.2]{Borel_seminar}.
\end{enumerate}
\end{defn}

We generalize the Bredon--Floyd theorem \cite[VII:2.2]{Borel_seminar} to noncompact groups, by adapting the circle of ideas within Floyd's initial argument for \cite[VI:1.1]{Borel_seminar}.

\begin{thm}\label{thm:BredonFloyd}
Let $G$ be a Lie group.
Let $M$ be a $\Z$-cohomology manifold with Bourbaki-proper $G$-action.
Any compact set in $M$ has only finitely many orbit~types.
\end{thm}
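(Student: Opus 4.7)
The plan is to reduce to the classical Bredon--Floyd theorem via the slice theorem for proper actions, with care taken to transfer the $\Z$-cohomology manifold hypothesis from $M$ to a slice.

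First I would localize. Bourbaki-properness makes the orbit projection $p\colon M\to M/G$ closed with compact fibers and $M/G$ Hausdorff, so $p(K)$ is compact. It therefore suffices to produce, for each $x$ with $p(x)\in p(K)$, a $G$-invariant open neighborhood of $Gx$ meeting only finitely many orbit types; a finite subcover of $p(K)$ by the images of such tubes then handles $K$.

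Second I would invoke slices. Bourbaki-properness forces every stabilizer $G_x$ to be a compact Lie subgroup of $G$. Palais's slice theorem then yields an open tube $U_x\iso G\times_{G_x}S_x$ around $Gx$, with $G_x$-invariant slice $S_x$. A point $[g,s]$ has $G$-stabilizer $g(G_x)_s g^{-1}$, so the $G$-orbit types occurring in $U_x$ form a quotient of the set of $G_x$-orbit types on $S_x$; in particular, finitely many on the latter side gives finitely many on the former.

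Third I would promote $S_x$ to a $\Z$-cohomology manifold and apply the classical theorem. The tube $U_x$ is open in $M$, hence itself a $\Z$-cohomology $n$-manifold, and projects locally trivially onto $G/G_x$ with fiber $S_x$. A local homology computation at a $G_x$-fixed point of $S_x$ (alternatively a Vietoris--Begle / Leray argument along the bundle $U_x\to G/G_x$) identifies $S_x$ as a $\Z$-cohomology $(n-\dim G/G_x)$-manifold. The classical Bredon--Floyd theorem applied to the action of the compact Lie group $G_x$ on $S_x$ then gives finitely many $G_x$-orbit types on the compact $G_x$-saturated subset of $S_x$ lying over $p(K\cap U_x)$, completing the local count.

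The hardest step will be the third: extracting the cohomology-manifold structure on the slice $S_x$ from that on the tube $U_x$. For locally linear actions (Elfving's setting) this is immediate from the linear slice model, but in the present cohomology-manifold context the slice's local homology must be computed in situ, and I expect this is where the bulk of the technical work lies.
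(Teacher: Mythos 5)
Your strategy is genuinely different from the paper's, and it has a real gap exactly where you predicted. To apply the classical Bredon--Floyd theorem to the $G_x$-action on the slice, you must first know that $S_x$ is itself a $\Z$-cohomology manifold, and neither of your suggested arguments establishes this as stated. A ``local homology computation at a $G_x$-fixed point'' cannot suffice: the cohomology-manifold condition is a condition at \emph{every} point of $S_x$, and it involves more than one local homology group (finite cohomological dimension over $\Z$, cohomological local connectedness, locally constant stalks isomorphic to $\Z$). What you actually need is the factor theorem for generalized manifolds --- if $X\times Y$ is a cohomology $n$-manifold over $\Z$ with $X$ and $Y$ locally compact of finite cohomological dimension, then each factor is a cohomology manifold (Raymond; Bredon, \emph{Sheaf Theory}, Ch.\ V) --- applied to the local product charts $V\times S_x$ of the tube, after first verifying that $S_x$, a locally closed subspace of $M$, has finite cohomological dimension; note that $M$ is \emph{not} assumed metrizable in this theorem, so the dimension-theoretic bookkeeping needs care. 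This is fillable but is a substantive piece of sheaf theory that your outline does not supply. Two smaller repairs: the set of slice points ``lying over $p(K\cap U_x)$'' need not be compact, since $K\cap U_x$ is only relatively compact in the open tube; you should shrink to a compact $G_x$-invariant slice neighborhood of $x$ before counting (or invoke the local form of Bredon--Floyd at the fixed point $x\in S_x^{G_x}$), and then use compactness of $p(K)$ to finish.

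The paper's proof avoids slices entirely, and thereby never has to transfer the cohomology-manifold hypothesis off of $M$. It argues by contradiction: infinitely many pairwise non-conjugate isotropy groups $G_{x_i}$, $x_i\in K$, all lie in the compact set $C=\{g\in G \mid gK\cap K\neq\varnothing\}$; a subsequence converges in the Hausdorff--Pompeiu metric to a compact subgroup $H\leqslant G$; the classical Bredon--Floyd theorem is applied to $H$ acting on $M$ itself over the compact set $UK$; and the Montgomery--Zippin neighboring-subgroups theorem conjugates all but finitely many $G_{x_i}$ into $H$, yielding infinitely many $H$-orbit types in $UK$, a contradiction. That route trades your slice-theoretic localization for a compactness argument in the space of closed subgroups, which is what lets the theorem go through with no additional sheaf-theoretic input; your route, once the factor theorem is supplied, has the merit of being constructive and local rather than by contradiction.
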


\begin{proof}
Assume not.
Then there exists an infinite sequence $\{x_i\}_{i=0}^\infty$ in some compact subset $K$ of $M$ such that no two of the isotropy groups $G_{x_i}$ are conjugate in $G$.
Since the action is proper, $C := \{ g \in G ~|~ gK \cap K \neq \varnothing \}$ is compact \cite[I:3.21]{tomDieck}.
In particular, since each $G_{x_i}$ is a closed subset of $C$, each $G_{x_i}$ is compact.
Recall that the set $\Cpt(G)$ of nonempty compact subsets of a metric space $(G,d)$ admits the Hausdorff--Pompeiu metric $d_{HP}$, which is compact if the ambient metric space is compact \cite[45:7]{Munkres}.
Then the infinite sequence $\{G_{x_i}\}_{i=0}^\infty$ in the compact metric space $(\Cpt(C),d_{HP})$ has a convergent subsequence, which we may reindex to be the original.
By continuity of multiplication and inversion in $G$, the compact subset $H := \lim_{i \to \infty} G_{x_i}$ is a subgroup of $G$.
Thus $H$ is a Lie group \cite[20.10]{Lee}.

Let $U$ be a compact neighborhood of the neutral element in $G$.
On the one hand, since $H$ is a compact Lie group acting on the $\Z$-cohomology manifold $M$, by the Bredon--Floyd theorem \cite[VII:2.2]{Borel_seminar}, the compact set $UK \subset M$ supports only finitely many $H$-orbit types.
On the other hand, by the Montgomery--Zippin neighboring-subgroups theorem \cite[4.2]{Palais}, there is a neighborhood $N$ of $H$ in $G$ so any subgroup of $G$ contained in $N$ is $U$-conjugate to a subgroup of $H$.
Since $H$ is a limit, there exists $i_0$ such that $G_{x_i} \subset N$ for all $i \geqslant i_0$.
Re-index so that $i_0=0$.
Then there exists $u_i \in U$ such that $G_{u_i x_i} = u_i G_{x_i} u_i^{-1} \subset H$ for each $i$.
Note $\{u_i x_i\}_{i=0}^\infty$ is an infinite sequence in $U K$ such that no two $G_{u_i x_i}$ are $G$-conjugate hence not $H$-conjugate, contradicting that $UK$ has only finitely many $H$-orbit types.
\end{proof}

\section{Equivariant absolute neighborhood retracts}

Recall $X$ is a \textbf{$G$-ANR for the class $\cC$} ($\cC$-absolute $G$-neighborhood retract) if $X$ belongs to $\cC$ and, for any closed $G$-embedding of $X$ into a member of $\cC$, there is a $G$-neighborhood of $X$ with $G$-retraction to $X$.
More generally, $X$ is a \textbf{$G$-ANE for the class $\cC$} ($\cC$-absolute $G$-neighborhood extensor) if, for any member $B$ of $\cC$ and closed $G$-subset $A$ of $B$ and any $G$-map $A \longra X$, there is a $G$-extension $U \longra X$ from some $G$-neighborhood $U$ of $A$ in $B$.
Notice a $G$-ANE need not belong to $\cC$.

Not long ago, S.~Antonyan \cite[5.7]{Antonyan2} made equivariant O.~Hanner's open-union theorem (see \cite[III:8.3]{Hu}), providing a local-to-global principle for $G$-extensors.

\begin{thm}[Antonyan]\label{thm:Antonyan}
Let $G$ be a locally compact Hausdorff group.
Let $\cC$ be a subclass of the class $G\text{-}\cP$ of paracompact Palais $G$-spaces with paracompact orbit space.
Any union of open $G$-subsets that are $G$-ANEs for $\cC$ is also a $G$-ANE for $\cC$.
\end{thm}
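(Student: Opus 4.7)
The plan is to adapt the proof of O~Hanner's classical open-union theorem \cite[III:8.3]{Hu} to the equivariant setting, exploiting paracompactness of the orbit spaces in $\cC$ to produce $G$-invariant partitions of unity. Given $B \in \cC$, a closed $G$-subset $A \subset B$, and a $G$-map $f : A \to X = \bigcup_\alpha U_\alpha$, I wish to extend $f$ $G$-equivariantly over an open $G$-neighborhood of $A$. The preimages $f^{-1}(U_\alpha)$ form an open $G$-cover of $A$ in its subspace topology, so each is of the form $A \cap W_\alpha$ for some open $G$-subset $W_\alpha \subset B$; their union $W := \bigcup_\alpha W_\alpha$ is an open $G$-neighborhood of $A$ over which I will build the extension.

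Next I would descend $\{W_\alpha\}$ to an open cover of $W/G$ and invoke paracompactness of $B/G$ (together with some care to ensure $W/G$ inherits paracompactness) to produce a locally finite $G$-invariant refinement $\{V_\beta\}_{\beta \in J}$ of $\{W_\alpha\}$ on $W$, a subordinate $G$-invariant partition of unity $\{\phi_\beta\}$, and an assignment $\beta \mapsto \alpha(\beta)$ with $\overline{V_\beta} \subset W_{\alpha(\beta)}$. Well-ordering $J$ and setting $\Omega_\beta := \{b \in W : \sum_{\gamma \leq \beta} \phi_\gamma(b) > \tfrac{1}{2}\}$, I would then build by transfinite recursion a coherent family of $G$-extensions $f_\beta : \Omega_\beta \to X$: at each successor stage, apply the $G$-ANE property of $U_{\alpha(\beta)}$ to the current partial extension restricted to an appropriate closed sub-$G$-space to push across $V_\beta$; at limit ordinals take unions, continuity being guaranteed by local finiteness of $\{V_\beta\}$. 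The colimit yields the required $G$-extension of $f$ over a $G$-neighborhood of $A$.

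The main obstacle I anticipate is verifying the hypotheses of the $G$-ANE property at each inductive step, which demands that the ambient $G$-space carrying the extension problem belong to $\cC$, in particular that it be Palais with paracompact orbit space. The Palais condition is inherited by open $G$-subsets, but paracompactness of the orbit space is fragile: open subspaces of paracompact Hausdorff spaces need not be paracompact in general. Overcoming this --- presumably the technical heart of Antonyan's argument --- requires constructing the $\Omega_\beta$ as $F_\sigma$-sets in $B/G$, for example as unions of closures of carriers of partition-of-unity functions, so that paracompactness propagates via an $F_\sigma$-hereditary argument. A secondary care is required at the very end to ensure the transfinitely assembled $G$-map is globally continuous, which again relies on local finiteness of the cover $\{V_\beta\}$ together with $G$-invariance of the partition $\{\phi_\beta\}$.
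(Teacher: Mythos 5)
This statement is quoted from the literature --- the paper offers no proof of it, only the citation to Antonyan \cite[5.7]{Antonyan2} --- so your proposal can only be measured against the argument one expects in that source, which is indeed the equivariant Hanner scheme you describe: extend the pulled-back cover to open $G$-sets of $B$, use paracompactness of the orbit space to get a $G$-invariant locally finite partition of unity, and assemble the extension by (trans)finite recursion over the level sets of partial sums. You also correctly identify the genuinely delicate point, namely that open subspaces of paracompact spaces need not be paracompact, and the standard repair (shrink to a cozero, hence $F_\sigma$, hence paracompact $G$-neighborhood of $A$, constructed in $B/G$ and pulled back). So the strategy is the right one.

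Two steps of your sketch, however, conceal real gaps. First, the sets $W_\alpha$ must be \emph{$G$-invariant} open subsets of $B$ with $A \cap W_\alpha \subseteq f^{-1}(U_\alpha)$; taking an arbitrary open extension of $f^{-1}(U_\alpha)$ and then saturating it under $G$ destroys this containment, since the saturation can pick up new points of $A$. You must instead extend the open sets $f^{-1}(U_\alpha)/G \subset A/G$ inside the (normal, since paracompact Hausdorff) orbit space $B/G$ and pull back along the quotient map. Second, and more seriously, your inductive step applies the $G$-ANE property of $U_{\alpha(\beta)}$ to an extension problem whose ambient space is $\Omega_{\beta}$ or a piece of it; but $\cC$ is an \emph{arbitrary} subclass of $G\text{-}\cP$, not assumed hereditary, so these intermediate spaces need not belong to $\cC$ and the $G$-ANE hypothesis simply does not apply to them as stated. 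The repair is to pose every intermediate extension problem with ambient space $B$ itself (or the fixed $F_\sigma$ neighborhood) and then restrict the resulting neighborhood extension; this is only legitimate if the domain of each partial extension is \emph{closed} in $B$, which forces you to run the recursion over the closed level sets $\bigl\{\,b : \sum_{\gamma \leqslant \beta}\phi_\gamma(b) \geqslant \tfrac{1}{2}\,\bigr\}$ rather than the open $\Omega_\beta$, and to maintain as an explicit inductive invariant that the partial extension carries $\overline{V_\beta} \cap (\text{current domain})$ into $U_{\alpha(\beta)}$ so that the $G$-ANE property of that particular $U_{\alpha(\beta)}$ is applicable. Without these adjustments the recursion as written does not close up.
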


Equivariant CW structures were found over very general groups, using the nerves of locally finite coverings of neighborhoods in certain $G$-Banach spaces \cite[1.1]{AE2}.
Recall that T.~Matumoto defined the notion of a $G$-CW complex \cite[1.2, 1.5]{Matumoto_GCW}.

\begin{thm}[Antonyan--Elfving]\label{thm:AE}
Let $G$ be a locally compact Hausdorff group.
Suppose that $X$ is a $G$-ANR for the class $G\text{-}\cM$ of $G$-metrizable Palais $G$-spaces.
Then $X$ has the equivariant homotopy type of a $G$-CW complex with Palais action.
\end{thm}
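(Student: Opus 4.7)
The plan is to realise $X$ as a $G$-homotopy retract of a $G$-CW complex with Palais action, and then upgrade the retraction to a $G$-homotopy equivalence. First, I would invoke an equivariant embedding theorem for locally compact Hausdorff $G$ (for instance H~Abels's normed linear embedding, in the form adapted by S~Antonyan) to obtain a closed $G$-embedding of the $G$-metrizable Palais $G$-space $X$ into a convex $G$-invariant subset $L$ of a $G$-Banach space with $L \in G\text{-}\cM$. Since $X$ is a $G$-ANR for this class, the embedding admits a $G$-retraction $r : U \longra X$ from some open $G$-neighborhood $U \supseteq X$ in $L$.

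Second, I would locally finitely $G$-cover $U$ by tubular neighborhoods $U_\alpha \iso G \times_{H_\alpha} V_\alpha$ furnished by Palais's slice theorem, with $H_\alpha$ compact and $V_\alpha$ a linear $H_\alpha$-slice. Paracompactness of the orbit space $U/G$, which holds because $G$-metrizable Palais $G$-spaces have metrizable (hence paracompact) orbit spaces, allows such a locally finite $G$-refinement. Forming the equivariant nerve $|N(\cU)|$ in the style of \cite[1.1]{AE2} yields a $G$-CW complex $K$ carrying a Palais action, together with canonical $G$-maps $\mu : U \longra K$ and $\nu : K \longra U$ such that $\nu \circ \mu \simeq_G \id_U$ via a standard straight-line $G$-homotopy inside $L$.

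Third, the $G$-maps $f := r \circ \nu : K \longra X$ and $s := \mu \circ \incl_X : X \longra K$ satisfy $f \circ s = r \circ (\nu \circ \mu) \circ \incl_X \simeq_G r \circ \incl_X = \id_X$, exhibiting $X$ as a $G$-homotopy retract of the $G$-CW complex $K$. To promote this retraction to an equivalence, I would appeal to an equivariant James/Dold mapping-telescope argument: the $G$-self-map $s \circ f : K \longra K$ can be rendered $G$-homotopy idempotent, and the infinite $G$-mapping telescope built from it is itself a $G$-CW complex with Palais action which is $G$-homotopy equivalent to $X$.

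The main obstacle, and essentially the content of \cite[1.1]{AE2}, is the construction of the nerve $K$ as a \emph{bona fide} $G$-CW complex with Palais action: one must arrange that the stabilizers of open simplices are compact and match those inherited from the slices $V_\alpha$, and that $\mu,\nu$ are genuinely equivariant rather than merely defined on orbit spaces. Once this structural input is granted, the domination-to-equivalence passage via the telescope is largely formal.
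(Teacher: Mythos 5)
This theorem is not proved in the paper at all: it is quoted verbatim from Antonyan--Elfving \cite[1.1]{AE2}, and the paper only sketches the internals of that proof later (in the countability remarks inside the proof of Theorem~\ref{thm:main}). Your top-level architecture --- closed $G$-embedding into a normed linear $G$-space, $G$-retraction from a neighborhood $U$, domination of $X$ by an equivariant nerve $K$, then a Mather-type telescope to convert the $G$-homotopy idempotent $s\circ f$ into a genuine $G$-homotopy equivalence --- does match the actual Antonyan--Elfving strategy, and your final telescope step is exactly the ``$G$-version of Mather's trick'' the paper itself invokes. So the skeleton is right.

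The genuine gap is in your second step, which is precisely where the content of the theorem lives, and the mechanism you propose for it would fail. First, Palais's slice theorem is a theorem about \emph{Lie} groups; for a general locally compact Hausdorff $G$ (the stated hypothesis) there are no tubes $G\times_{H_\alpha}V_\alpha$ with compact $H_\alpha$ and, even for Lie $G$ acting merely continuously, the slices $V_\alpha$ need not be linear. Second, and more seriously, a tube $G\times_{H_\alpha}V_\alpha$ is a $G$-\emph{invariant} set, so the nerve of a cover by such tubes carries a trivial $G$-action (it is just the nerve of the induced cover of $U/G$); no $G$-maps $\mu\colon U\to K$, $\nu\colon K\to U$ with $\nu\circ\mu\simeq_G \id_U$ can exist for such a $K$ --- already $U=G$ with the free translation action is a counterexample, since its invariant-cover nerve is contractible with trivial action. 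The actual construction in \cite{AE2} avoids this by using a \emph{rich $G$-normal cover} indexed by $G\times\cM$, whose members are translated (not preserved) by $G$, so that $G$ acts nontrivially on the index set of the nerve; the resulting object is a semisimplicial $G$-space whose pieces are smooth $G$-manifolds, and it is Illman's equivariant triangulation theorem that endows it with a $G$-CW structure with Palais action. Your closing paragraph concedes that this construction is ``essentially the content of \cite[1.1]{AE2},'' which is accurate, but it means the proposal assumes the theorem's hard part rather than proving it, and the concrete substitute you offer for that part (invariant tubes plus an ordinary nerve) is not repairable as written.
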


\begin{rem}\label{rem:RudinStone}
Observe that the class $G\text{-}\cM$ is a subclass of $G\text{-}\cP$, as follows.
Let $X$ be a member of $G\text{-}\cM$.
Since $X$ is $G$-metrizable, the orbit space $X/G$ has an induced metric given by an infimum.
Then, since both $X$ and $X/G$ are metrizable, by Stone's theorem \cite[41.4]{Munkres}, we have that both $X$ and $X/G$ are paracompact.
\end{rem}

As classes, observe $\cC \cap G\text{-ANE}(\cC) \subseteq G\text{-ANR}(\cC)$; a converse is \cite[6.3]{AAM}.

\begin{thm}[Antonyan--Antonyan--Mart\'in-Peinador]\label{thm:AAM}
Let $G$ be a locally compact Hausdorff group.
Then $G\text{-ANR}(G\text{-}\cM) ~=~ G\text{-}\cM ~\cap~ G\text{-ANE}(G\text{-}\cM)$.
\end{thm}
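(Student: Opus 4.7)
The inclusion $\supseteq$ is the observation immediately preceding the statement, so only the forward inclusion requires work. Let $X \in G\text{-ANR}(G\text{-}\cM)$; by definition $X \in G\text{-}\cM$, and I must produce, for any $B \in G\text{-}\cM$ with closed $G$-subset $A$ and any $G$-map $f : A \longra X$, an open $G$-neighborhood $U$ of $A$ in $B$ together with a $G$-extension $U \longra X$ of $f$.

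My plan is to embed $X$ as a closed $G$-subspace of a larger ambient $G$-space $L \in G\text{-}\cM$ that is itself a $G$-ANE for $G\text{-}\cM$, and then transport extensions back to $X$ via the neighborhood $G$-retraction provided by the $G$-ANR hypothesis. First, I will produce a closed equivariant embedding $j : X \hookra L$ into a normed linear $G$-space $L \in G\text{-}\cM$ carrying an isometric linear action; convexity together with linear isometry should then promote $L$ to an absolute equivariant extensor for $G\text{-}\cM$ by an equivariant Dugundji--Hanner argument (convex combinations make sense globally, and Palais-properness of $B$ lets one produce the usual partition-of-unity data $G$-equivariantly). Second, the $G$-ANR hypothesis applied to $j$ will supply an open $G$-neighborhood $W$ of $j(X)$ in $L$ together with a $G$-retraction $r : W \longra X$. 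Third, given $(B, A, f)$ as above, I will extend $j \circ f : A \longra L$ to a $G$-map $F : V \longra L$ over some open $G$-neighborhood $V$ of $A$ in $B$ using the $G$-ANE property of $L$; then $U := F^{-1}(W)$ is an open $G$-neighborhood of $A$ in $B$, and $r \circ (F|_U) : U \longra X$ will be the desired extension of $f$.

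The hard part will be Step~1: one needs a genuinely equivariant linearization sending an arbitrary $X \in G\text{-}\cM$ to a closed $G$-subspace of a $G$-metrizable normed linear $G$-space with isometric action, itself having Palais action. For Palais-proper actions of locally compact Hausdorff groups this is the heart of the Abels--Antonyan equivariant embedding program---the same machinery that underwrites Theorem~\ref{thm:AE}---and it is where the standing hypothesis on $G$ is used, in tandem with the paracompactness of $X$ and $X/G$ furnished by Remark~\ref{rem:RudinStone}. Once Step~1 is in hand, Steps~2 and~3 are formal, being the equivariant avatar of the classical argument that ANR equals ANE in the metrizable category.
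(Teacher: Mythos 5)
The paper does not actually prove this statement: it is imported verbatim from the cited source \cite[6.3]{AAM}, with only the easy inclusion $\supseteq$ recorded in the sentence immediately preceding it. Your outline is the standard equivariant Hanner argument and is, in substance, the route the cited source takes --- closed equivariant linearization, equivariant Dugundji-type extension into the linear model, then transport back along the neighborhood $G$-retraction. Your Steps~2 and~3 are indeed formal once Step~1 is granted, and your identification of Step~1 with the Abels--Antonyan embedding machinery is the correct attribution of where the real work lives.

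One technical point in Step~1 needs repair. You ask for the ambient normed linear $G$-space to satisfy $L \in G\text{-}\cM$, i.e.\ to carry a Palais action globally. For noncompact $G$ this is impossible: the origin is a global fixed point of any linear action, so its isotropy group is all of $G$ and the action on $L$ is not even Cartan, let alone Palais. What the embedding machinery actually delivers (cf.\ \cite[3.10, 5.2]{AAR}, as recalled in the proof of Theorem~\ref{thm:main}) is a closed $G$-embedding $j : X \hookra O$ into a $G$-invariant open subset $O$ of $L$ on which the restricted action \emph{is} Palais. This does not break your argument, but it must be routed correctly: apply the $G$-ANR hypothesis to the closed embedding of $X$ into the member $O$ of $G\text{-}\cM$ to obtain the retraction $r : W \longra X$ with $W$ a $G$-neighborhood in $O$ (hence open in $L$), and use $L$, or the convex-combination argument on it, only in its role as an \emph{extensor} for the class $G\text{-}\cM$ --- which is legitimate precisely because, as the paper itself emphasizes, a $G$-ANE for a class $\cC$ need not belong to $\cC$. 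With that adjustment, pulling back $W$ under the Dugundji extension $F$ and composing with $r$ goes through unchanged.
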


The following technical notion over compact groups was introduced in \cite{Jaworowski2}.
We restate from \cite[2.2]{AAMV} the generalization over noncompact groups.

\begin{defn}[Jaworowski]
Let $G$ be a Lie group.
A Palais $G$-space $X$ has \textbf{finite structure} if it has only finitely many orbit types and, for each orbit type $(H)$, the quotient map $X_{(H)} \longra X_{(H)}/G$ is a $G/H$-bundle with only finitely many local trivializations.
Here $(H)$ is the conjugacy class of $H$ in $G$, $X_{(H)} := \{x \in X ~|~ (G_x)=(H) \}$ is the $(H)$-stratum, $G_x := \{g \in G ~|~ gx=x\}$ is an isotropy group.
\end{defn}

\begin{rem}
Any compact Lie group is \textbf{linear}: it has an isomorphic topological embedding into $GL_n(\R)$ for some $n$.
This is a case of the following consequence of the Peter--Weyl theorem: any compact topological group $G$ embeds into a product of unitary groups; if $G$ has no small subgroups this product is finite \cite[4.1]{Khan}.
\end{rem}

Recall $X^H := \{x \in X ~|~ \forall g \in H: gx=x\}$ denotes the \textbf{$H$-fixed subspace of $X$}.
In the following recent theorem \cite[6.1]{AAMV}, the Jaworowski--Lashof criterion for $G$-ANRs \cite{Jaworowski2} is generalized from compact Lie groups $G$ to all linear ones.

\begin{thm}[Antonyan--Antonyan--Mata-Romero--Vargas-Betancourt]\label{thm:AAMV}
Let $G$ be a linear Lie group.
Let $X$ be a $G$-metrizable Palais $G$-space with finite structure.
Then $X$ is a $G$-ANR for the class of $G$-metrizable Palais $G$-spaces, if and only if $X^H$ is an ANR for the class of metrizable spaces for each compact subgroup $H<G$.
\end{thm}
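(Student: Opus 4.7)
I would establish both directions separately; the backward direction is the main content. For the forward direction, suppose $X$ is a $G$-ANR for $G\text{-}\cM$ and let $H \leqslant G$ be closed. Since $X$ is $G$-metrizable, $X^H$ is metrizable. Given a closed embedding $X^H \hookra Z$ into a metric space, I would extend this to a closed $G$-embedding of $X$ into a $G$-metric Palais $G$-space $B$ with $Z \subseteq B^H$ (for instance, via the induction construction $G \x_H (-)$ applied to the adjunction space $X \cup_{X^H} Z$ with trivial $H$-action on $Z$). Restricting a $G$-neighborhood $G$-retraction $B \supset U \longra X$ to $H$-fixed sets yields an ordinary retraction $U^H \longra X^H$, and intersecting with $Z$ exhibits $X^H$ as an ANR.

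For the backward direction, assume $X^K$ is an ANR for each closed $K \leqslant G$. My plan has three steps. First, use Palais-properness (compact isotropies) together with the slice theorem and the finite-structure hypothesis to cover $X$ by finitely many $G$-open tubes $U_\alpha = G \x_{H_\alpha} S_\alpha$; finite structure is what forces the cover to be finite, since only finitely many conjugacy classes of isotropies arise and each orbit-type bundle has only finitely many local trivializations. Second, fix one tube $U = G \x_H S$ with $H$ compact; since $S^K$ is open in $X^K$ for each closed $K \leqslant H$, the hypothesis makes $S^K$ an ANR, so Jaworowski--Lashof's criterion for the compact Lie group $H$ shows $S$ is an $H$-ANE for $H\text{-}\cM$, and the induction--restriction adjunction $\Map_G(G \x_H -, -) = \Map_H(-, \res^G_H -)$ transports this to show $G \x_H S$ is a $G$-ANE for $G\text{-}\cM$. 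Third, apply Antonyan's open-union theorem (Theorem \ref{thm:Antonyan}) to the finite $G$-open cover $\{U_\alpha\}$ to conclude that $X$ is a $G$-ANE; since $X \in G\text{-}\cM$, Theorem \ref{thm:AAM} then promotes this to the $G$-ANR property.

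The principal technical difficulty is the second step: reducing the $G$-ANE property of a tube $G \x_H S$ for a noncompact linear Lie group $G$ to the $H$-ANE property of a slice $S$ for the compact group $H$. Jaworowski's and Lashof's original arguments hinge on invariant metrics on function spaces, a tool that is well behaved only when the ambient group is compact; here one must exploit instead the tube structure and the compactness of $H$ alone, together with the induction adjunction that transports $H$-equivariance into full $G$-equivariance. Verifying that this transport respects the $G\text{-}\cM$ subclass (so that one may legitimately invoke Theorems \ref{thm:Antonyan} and \ref{thm:AAM}) is where the linearity hypothesis on $G$ is expected to enter, via Palais-type equivariant embeddings into Banach spaces carrying linear $G$-actions.
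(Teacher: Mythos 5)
First, a note on context: the paper does not prove this statement at all --- it is imported verbatim as \cite[6.1]{AAMV} --- so there is no internal proof to compare against. Judged against the published argument, your backward direction has the right overall architecture (reduce to tubes $G \times_H S$ with $H$ compact, apply the compact-group Jaworowski--Lashof criterion to the slice, transport across the induction adjunction, and glue with Antonyan's union theorem), and that is indeed the general strategy in the literature. However, several of your individual steps have genuine gaps.

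In the forward direction, the induced space $G \times_H (X \cup_{X^H} Z)$ does not receive a closed $G$-embedding of $X$: the counit $G \times_H X \longra X$, $[g,x] \mapsto gx$, is surjective but not injective, so $X$ is a quotient, not a $G$-subspace, of what you build. The standard repair is to $G$-embed $X$ closedly into a normed linear $G$-space $V$ (Tietze--Gleason/Palais), restrict the neighborhood $G$-retraction to $H$-fixed sets so that $X^H$ becomes a closed neighborhood retract of the convex set $V^H$, and invoke Dugundji--Hanner. In the backward direction: (i) finite structure does not hand you a finite cover of $X$ by open tubes --- the finitely many local trivializations live in the strata $X_{(H)}$, which are not open in $X$; the actual argument orders the orbit types canonically and inducts over them. (Note also that Theorem \ref{thm:Antonyan} tolerates arbitrary unions, so finiteness of the cover is not where the finite-structure hypothesis earns its keep.) (ii) The claim that $S^K$ is open in $X^K$ for every closed $K \leqslant H$ is false: already for $K$ trivial, a slice $S$ is a local cross-section, not an open subset of $X$, unless $\dim G = 0$. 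What is open in $X^K$ is $(GS)^K = (G\times_H S)^K$, and extracting the ANR property of $S^K$ from the hypothesis on the various $X^L$ requires the nontrivial structure theory of $K$-fixed sets of twisted products. This is exactly the point you defer as ``the principal technical difficulty,'' but as written your reduction to the compact-group criterion does not go through without it.
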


\section{Equivariant topological manifolds}

\begin{thm}\label{thm:main}
Let $G$ be a linear Lie group.
Let $M$ be a cohomology manifold over $\Z$ that is both separable and metrizable.
Suppose $M$ has Palais $G$-action and the fixed set $M^H$ is ANR for the class of metrizable spaces for each compact subgroup $H$ of $G$.
Then $M$ is $G$-homotopy equivalent to a countable proper $G$-CW complex.
\end{thm}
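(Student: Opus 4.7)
The plan is to establish that $M$ is a $G$-ANR for the class $G\text{-}\cM$ of $G$-metrizable Palais $G$-spaces, then invoke Theorem~\ref{thm:AE} to obtain a $G$-CW approximation, and finally to use separability of $M$ to trim this approximation down to a countable complex. The structural inputs I combine are Theorem~\ref{thm:AAMV}, the local $G$-ANR criterion requiring \emph{finite structure}, and Theorem~\ref{thm:Antonyan}, which promotes a $G$-invariant open cover by $G$-ANEs to a global $G$-ANE.

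First, for each point $x\in M$ I would produce a $G$-invariant open neighborhood $V_x$ of the orbit $Gx$ satisfying the hypotheses of Theorem~\ref{thm:AAMV}. Bourbaki-properness of the Palais action forces $G_x$ to be a compact subgroup of the Lie group $G$; by Palais's slice theorem for Cartan actions \cite{Palais}, there is a $G_x$-slice $S_x$ at $x$ whose tube $GS_x$ is $G$-equivariantly homeomorphic to $G\times_{G_x}S_x$. Choosing a relatively compact $G_x$-open neighborhood $N$ of $x$ in $S_x$ and applying Theorem~\ref{thm:BredonFloyd} to the compact group $G_x$ acting on the $\Z$-cohomology manifold $M$, I conclude that $\overline{N}$ supports only finitely many $G_x$-orbit types; hence $V_x := GN\iso G\times_{G_x}N$ supports only finitely many $G$-orbit types. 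After further shrinkage of $N$, each stratum $(V_x)_{(H)}\to(V_x)_{(H)}/G$ can be arranged to admit finitely many local trivializations as a $G/H$-bundle, so $V_x$ has finite structure. As an open $G$-subspace of $M$, $V_x$ inherits $G$-metrizability and the Palais property; for each closed $H\leq G$, the fixed set $V_x^H$ is open in the metric ANR $M^H$ and hence itself a metric ANR. Theorem~\ref{thm:AAMV} then yields $V_x\in G\text{-ANR}(G\text{-}\cM)$, and Theorem~\ref{thm:AAM} promotes this to $V_x\in G\text{-ANE}(G\text{-}\cM)$.

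Separability and metrizability of $M$ make it Lindel\"of, so from $\{V_x\}_{x\in M}$ I extract a countable $G$-invariant open subcover $\{V_n\}_{n\in\N}$. Remark~\ref{rem:RudinStone} places $G\text{-}\cM\subset G\text{-}\cP$, so Theorem~\ref{thm:Antonyan} gives $M=\bigcup_n V_n\in G\text{-ANE}(G\text{-}\cM)$; a second invocation of Theorem~\ref{thm:AAM} yields $M\in G\text{-ANR}(G\text{-}\cM)$. Theorem~\ref{thm:AE} then produces a $G$-CW complex $X$ with Palais action and a $G$-homotopy equivalence $M\simeq X$. Properness of the Palais action forces compact cell stabilizers, making $X$ \emph{proper}; and separability of $M$ should be threaded through the Antonyan--Elfving nerve construction (applied to a countable locally finite $G$-refinement of $\{V_n\}$) to make $X$ \emph{countable}.

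The main obstacle I anticipate is verifying the finite bundle-trivialization clause of finite structure for each tube $V_x$. The decomposition $G\times_{G_x}N$ strongly suggests a single global trivialization per stratum, but confirming ``finitely many local trivializations'' in the precise sense of the definition may require careful shrinking of $S_x$ together with Mostow's linearization of the compact $G_x$-action on the slice. A secondary technical point is the countability of $X$: the conceptual source is separability of $M$, but it must be confirmed by tracing the nerve construction in the proof of Theorem~\ref{thm:AE}.
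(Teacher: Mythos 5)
Your first stage --- building a $G$-invariant open cover of $M$ by sets with finite structure, feeding each into Theorem~\ref{thm:AAMV} and Theorem~\ref{thm:AAM}, and globalizing with Theorem~\ref{thm:Antonyan} before applying Theorem~\ref{thm:AE} --- is essentially the argument of the paper. The only real difference is that you build one tube per orbit and use the Lindel\"of property to extract a countable subcover, whereas the paper saturates a countable exhaustion of $M$ by relatively compact open sets $M_i$ and applies the noncompact Bredon--Floyd theorem (Theorem~\ref{thm:BredonFloyd}) to each $\ol{M_i}$; your version needs only the classical compact case of Bredon--Floyd, applied to $G_x$ acting on the slice, which is a mild simplification. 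Both versions leave the same amount of work in the finitely-many-local-trivializations clause of finite structure, which the paper also settles by restricting a finite tube cover to each stratum; the obstacle you flag there is no worse than what the paper itself faces.

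The genuine gap is the countability of the $G$-CW complex, which you explicitly defer. The difficulty is not mere bookkeeping: the nerve in the proof of Theorem~\ref{thm:AE} is not the nerve of a cover of $M$, but of a rich $G$-normal cover of a $G$-neighborhood $U$ of $M$ inside the ambient $G$-normed linear space $L = E \times N$ of \cite[3.10]{AAR}, and that ambient space is not known to be separable; so passing to a countable locally finite $G$-refinement of $\{V_n\}$ on $M$ does not control the index set of the cover that actually produces the $G$-cells. The paper's fix is to replace $L$ by Elfving's $G$-Banach space $C_0(M)$, prove $C_0(M)$ is separable (Stone--Weierstrass applied to the countable family of distance functions $d(-,p)$ for $p$ in a countable dense subset of the one-point compactification $M^*$), conclude that $U$ is Lindel\"of, and only then take the rich $G$-normal cover with countable index set. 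A second omission: the output of \cite[Proposition~5.2]{AE2} is a countable complex that \emph{dominates} $M$, not one $G$-homotopy equivalent to it, so one still needs a $G$-version of Mather's trick to convert a countable domination into a countable $G$-homotopy equivalence. Without these two steps your argument yields only a proper $G$-CW complex of unknown cardinality.
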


\begin{proof}
Let $M$ be a $\Z$-cohomology manifold.
Since $M$ is separable and locally compact, there exists an increasing infinite sequence $\{M_i\}_{i=0}^\infty$ of open sets in $M$ whose union is $M$ and whose closures $\ol{M_i}$ in $M$ are compact.
By Theorem~\ref{thm:BredonFloyd}, the compact set $\ol{M_i}$, hence $M_i$, has only finitely many conjugacy classes of isotropy group.
The $G$-saturation $G M_i = \bigcup_{g \in G} g M_i$ is also open \cite[I:3.1(i)]{tomDieck} and has only finitely many $G$-orbit types.
Since $(G M_i)^H = G M_i \cap M^H$ is open in the ANR $M^H$, we have that $(G M_i)^H$ is also an ANR by Hanner's global-to-local principle \cite[III:7.9]{Hu}.

Since $G$ is a Lie group and $G \ol{M_i}$ is a Palais $G$-space, by Palais' slice theorem \cite[2.3.1, 2.1.2]{Palais}, $G \ol{M_i}$ has a covering $\cT_i$ by $G$-tubes of varying orbit types.
Furthermore, since $(G\ol{M_i})/G = \ol{M_i}/G$ is compact, $\cT_i$ can be assumed finite.
The stratum $(G M_i)_{(H)}$ of $G M_i \subset G \ol{M_i}$ has a single orbit type, so restriction of $\cT_i$ to it gives a finite covering by local trivializations of a $G/H$-fiber bundle with structure group $G$.
So the Palais $G$-space $G M_i$ has finite structure.
By Palais' metrization theorem \cite[4.3.4]{Palais}, the separable metrizable $M$, hence $G M_i$, is $G$-metrizable.
Since $G$ is linear, $G M_i$ is a $G$-ANR for $G\text{-}\cM$ (\ref{thm:AAMV}), hence is a $G$-ANE for $G\text{-}\cM$ (\ref{thm:AAM}).


Thus, by Remark~\ref{rem:RudinStone} and Theorem~\ref{thm:Antonyan}, $M = \bigcup_{i \in I} G M_i$ is a $G$-ANE for $G\text{-}\cM$.
Then, since $M$ is also member of $G\text{-}\cM$, $M$ is a $G$-ANR for $G\text{-}\cM$.
Therefore, by Theorem~\ref{thm:AE}, we conclude $M$ has the $G$-homotopy type of a proper $G$-CW complex.

We now make some remarks on how to guarantee only countably many $G$-cells.
The proof of Theorem~\ref{thm:AE} starts in \cite[5.2]{AAR}, with a closed $G$-embedding of $X$ into a $G$-normed linear space $L$ with Palais action on some $G$-neighborhood.
Specifically, those authors take $L=E \times N$ \cite[3.10]{AAR}, which is valid for any $G$-metrizable Palais $G$-space $X$.
Since our $X=M$ is locally compact, \emph{alternatively use} the simpler and more classical $G$-Banach space $L=C_0(X)$, where
\begin{eqnarray*}
C_0(X) &:=& \{ f \in C(X) ~|~ \forall \eps>0, \exists \text{ compact } K \subset X, \forall x \in X-K : |f(x)|<\eps \}\\
\|f\| &:=& \sup \{ |f(x)| ~|~ x \in X \}, \text{which is well-defined}.
\end{eqnarray*}
Indeed, E.~Elfving in \cite[Propositions~2,3]{Elfving2} showed the existence of a Kurotowski-like $G$-embedding of $X$ into $C_0(X)-\{0\}$ on which the continuous $G$-action is Palais.

Since $X$ is separable, there exists a countable dense subset $\Delta \subset X$.
Since $X$ is locally compact, the Alexandroff one-point compactification $X^*$ exists.
Since $X$ is second-countable, so is $X^*$, hence $X^*$ admits a metric $d$ by the Urysohn metrization theorem \cite[34.1]{Munkres}.
Consider the countable collection $\Delta_d \subset C(X^*)$ defined by
\[
\Delta_d ~:=~ \{1\} ~\cup~ \{d(-,p) \in C(X^*) ~|~ p \in \Delta\}.
\]
Since $\Delta_d$ contains a nonzero constant function and separates points because $\Delta$ is dense in $X^*$, by the Stone--Weierstrass theorem \cite[Corollary~3, p174]{Stone2}, the countable subring $\Q\langle\Delta_d\rangle$, consisting of rational polynomials in the elements of $\Delta_d$:
\[
\Q\langle\Delta_d\rangle ~:=~ \mathrm{Im}\left( \Q[\Delta_d] \longra C(X^*) \right)
\]
is dense in $C(X^*)$.
Hence $C_0(X) \subset C(X^*)$ is separable.

Then the $G$-neighborhood $U$ of $X$ in $C_0(X)-\{0\}$, on which the $G$-retraction $U \longra X$ is defined, is Lindel\"of, as it is separable and metrizable.
So in the proof of \cite[Proposition~5.2]{AE2}, the rich $G$-normal cover $\cU$ with index set $G \times \cM$ can be assumed to have $\cM$ a countable set.
The geometric $G$-nerve $K(\cU)$ is indexed \cite[p166]{AE2} by certain finite subsets of $\cM$.
Thus the semisimplicial $G$-space $K(\cU)$ has only countably many $G$-cells, according to the proof of \cite[Theorem~5.3]{AE2}, which relies on S.~Illman \cite{Illman_Lie} and this in turn involves only countably many $G$-cells for a smooth $G$-manifold.
Finally, since \cite[Proposition~5.2]{AE2} states that $K(\cU)$ $G$-dominates $X$, by a $G$-version of Mather's trick (see second paragraph of \cite[Proof~2.5]{Khan}), the $G$-CW complex for $X=M$ has only countably many $G$-cells.

For the convenience of the reader, we detail the conclusion of this last sentence.
Since the $G$-CW complex $K(\cU)$ $G$-dominates $X$, there are $G$-maps $u: X \longra K(\cU)$, $d: K(\cU) \longra X$, and $G$-homotopy $h: X \times [0,1] \longra X$ from $h_1 = d \circ u$ to $h_0 = \id_X$.
By $G$-cellular approximation, there exists a cellular $G$-map $\alpha: K(\cU) \longra K(\cU)$ that is $G$-homotopic to $u \circ d$ \cite[II:2.1]{tomDieck}.
On the one hand, the mapping torus
\[
\Torus(\alpha) ~:=~ \frac{K(\cU) \x [0,1]}{(x,1) \sim (\alpha(x),0)}
\]
is a $G$-CW complex \cite[I:1.11]{tomDieck}.
On the other hand it is $G$-homotopy equivalent~to
\[
\Torus(u \circ d) ~\simeq_G~ \Torus(d \circ u) ~\simeq_G~ \Torus(\id_X) ~=~ X \x S^1.
\]
Thus $X \simeq X \times \R$ is $G$-homotopy equivalent to the infinite cyclic cover of $\Torus(\alpha)$, namely the bi-infinite mapping telescope of $\alpha$ | a countable $G$-CW complex.
\end{proof}

Finally, we generalize \cite[2.5]{Khan} from $G$ being compact.
Note that the manifold must be noncompact if $G$ is noncompact in order for the action to be Cartan-proper.

\begin{cor}\label{cor:main}
Let $G$ be a linear Lie group.
Any topological $G$-manifold with Palais action has the equivariant homotopy type of a countable proper $G$-CW complex.
\end{cor}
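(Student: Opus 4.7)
The plan is to reduce the corollary to Theorem~\ref{thm:main} by verifying each of its hypotheses for the topological $G$-manifold $M$. Under the standard convention that a topological manifold is Hausdorff and second-countable, $M$ is separable and, by Urysohn's metrization theorem, metrizable. Being a $\Z$-cohomology manifold \cite[I:3.3]{Borel_seminar} is a local property satisfied by every Euclidean chart, so $M$ inherits this structure.

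The only substantive hypothesis to verify is that $M^H$ is an ANR for the class of metrizable spaces, for each closed subgroup $H \leqslant G$. If $M^H = \varnothing$ this is trivial; otherwise pick $x \in M^H$ so that $H \subseteq G_x$. As in the proof of Theorem~\ref{thm:BredonFloyd}, Palais-properness makes the set $\{g \in G \mid g\{x\} \cap \{x\} \neq \varnothing\}$ compact (cf.\ \cite[I:3.21]{tomDieck}), so $G_x$ (being closed inside this compact set) is itself compact; hence the closed subgroup $H \subseteq G_x$ is a compact Lie group. The topological manifold $M$ is itself a metrizable ANR, because each Euclidean chart is an ANR and Hanner's open-union theorem \cite[III:8.3]{Hu} globalizes this. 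It now suffices to invoke the classical fact that the fixed-point set of a compact Lie group action on a metrizable ANR is again a metrizable ANR.

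With every hypothesis of Theorem~\ref{thm:main} established, the conclusion follows directly. The expected obstacle is the invoked compact-group fixed-set ANR statement: its standard Haar-averaging proof requires care in arranging that the averaged retraction lands inside $M^H$ rather than merely in the ambient linear space. This is handled by combining equivariant averaging over $H$ with the local convex structure of $M$ inside an equivariant linear embedding such as Elfving's $M \hookrightarrow C_0(M)$ from \cite[Propositions~2,3]{Elfving2}; alternatively one may appeal directly to a published fixed-set ANR theorem from the equivariant topology literature.
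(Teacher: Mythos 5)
There is a genuine gap, and it sits exactly where you flagged your ``expected obstacle.'' Your reduction to Theorem~\ref{thm:main} is the right strategy (it is the paper's strategy too), and the verification that $M$ is separable, metrizable, and a $\Z$-cohomology manifold is fine. But the ANR hypothesis for $M^H$ does not need, and cannot safely rest on, the assertion that ``the fixed-point set of a compact Lie group action on a metrizable ANR is again a metrizable ANR.'' That is not a classical fact. The classical results of this type concern \emph{equivariant} ANRs: if $X$ is a $G$-ANR then the sets $X^H$ are ANRs, and the converse (Jaworowski's criterion, generalized in Theorem~\ref{thm:AAMV}) requires both the ANR property of all fixed sets \emph{and} finite structure as hypotheses. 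If a bare (nonequivariant) ANR with a compact Lie group action automatically had ANR fixed sets, one direction of that entire circle of theorems would be vacuous. Your Haar-averaging sketch does not close this: averaging a neighborhood retraction $U \to M$ over $H$ inside a linear embedding produces a map into the ambient linear space with no reason to land in $M$, let alone in $M^H$, and ``local convex structure'' of a topological manifold is exactly what is unavailable. So as written the proof of the key hypothesis is incomplete.

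The fix is much simpler and is the point of the definition stated immediately after the corollary: a \emph{topological $G$-manifold} in the sense of \cite[2.2]{Khan} is, by definition, a $G$-space for which $M^H$ is itself a topological manifold (separable, metrizable, locally euclidean) for \emph{every} closed subgroup $H$. So you should apply Hanner's local-to-global principle \cite[III:8.3]{Hu} not to $M$ but to each $M^H$ directly: it is covered by open sets homeomorphic to open subsets of euclidean space, each of which is an ANR, hence $M^H$ is an ANR for the class of metrizable spaces. No compactness of $H$, no fixed-set theorem, and no averaging is needed. (Your observation that isotropy groups are compact is correct but plays no role.) With that substitution your argument coincides with the paper's proof.
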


Here, by \textbf{topological $G$-manifold} \cite[2.2]{Khan}, we mean the $H$-fixed subspace is a topological ($C^0$) manifold for each closed subgroup $H$ of a topological group $G$.
Herein, a topological manifold shall be separable, metrizable, and locally euclidean.

\begin{proof}
Let $M$ be a topological $G$-manifold with Palais action.
By Hanner's local-to-global principle \cite[III:8.3]{Hu}, each manifold $M^H$ is an ANR for the class of metrizable spaces.
Also $M$ is separable, metrizable, and a $\Z$-cohomology manifold.
Therefore we are done by Theorem~\ref{thm:main}.
\end{proof}

Thus more tractible are its Davis--L\"uck $G$-spectral homology groups \cite[3.7, 4.3]{DL1}, since we conclude countability of the $G$-CW complex that left-approximates.

\begin{cor}
Let $G$ be a linear Lie group.
Let $f: M \longra N$ be a $G$-map between topological $G$-manifolds with Palais actions.
Then $f$ is a $G$-homotopy equivalence if and only if $f^H: M^H \longra N^H$ is a homotopy equivalence for each closed $H$ of $G$.
\end{cor}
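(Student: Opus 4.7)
My plan is to reduce the statement to the equivariant Whitehead theorem by passing, through Corollary~\ref{cor:main}, from topological $G$-manifolds to proper $G$-CW complexes, on which an equivariant Whitehead theorem is well-developed.

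The forward implication I would dispatch first, and essentially for free: since taking $H$-fixed points is a functor that sends $G$-maps to maps and $G$-homotopies to homotopies, restriction of a $G$-homotopy inverse for $f$ yields an ordinary homotopy inverse for each $f^H$.

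For the converse, I would first apply Corollary~\ref{cor:main} to obtain $G$-homotopy equivalences $\alpha: X \longra M$ and $\beta: Y \longra N$, where $X$ and $Y$ are countable proper $G$-CW complexes.  After picking a $G$-homotopy inverse $\beta'$ of $\beta$, I would form the $G$-map $\tilde f := \beta' \circ f \circ \alpha: X \longra Y$.  Because fixed-point restriction preserves $G$-homotopy equivalences, the hypothesis on $f^H$ propagates to show that $\tilde f^H: X^H \longra Y^H$ is an ordinary homotopy equivalence for every closed subgroup $H$ of $G$; in particular, it is a weak homotopy equivalence.

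To finish, I would invoke the equivariant Whitehead theorem for $G$-CW complexes, as formulated for example via the orbit category in the Davis--L\"uck framework \cite[3.7]{DL1}: a $G$-map between $G$-CW complexes is a $G$-homotopy equivalence provided its $K$-fixed-set map is a weak homotopy equivalence for every isotropy subgroup $K$ appearing in the cell structure.  Since $X$ and $Y$ are \emph{proper} $G$-CW complexes over a Lie group, every such $K$ is compact, hence closed, so our hypothesis covers them.  Thus $\tilde f$ is a $G$-homotopy equivalence, and transporting back through $\alpha$ and $\beta$ yields that $f$ is one as well.  The only obstacle I anticipate is citational rather than substantive, namely pinning down a formulation of the equivariant Whitehead theorem that is explicit for \emph{proper} $G$-CW complexes over noncompact linear Lie groups; this is, however, standard in the literature on proper equivariant homotopy theory.
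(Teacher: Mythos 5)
Your proposal is correct and matches the paper's argument exactly: the paper also reduces to $G$-CW complexes via Corollary~\ref{cor:main} and then invokes the equivariant Whitehead theorem (citing tom~Dieck~II:2.7 rather than the Davis--L\"uck formulation). The citational worry you raise at the end is not an issue, since tom~Dieck's version is stated for arbitrary $G$-CW complexes and needs no separate ``proper'' refinement.
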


\begin{proof}
This is immediate from Corollary~\ref{cor:main} and the corresponding theorem for $G$-CW complexes \cite[II:2.7]{tomDieck}, which is proven using $G$-obstruction theory.
\end{proof}

In particular, we generalize the main result of Elfving's thesis \cite[4.20]{Elfving1}.
The definition of \emph{locally linear}, along with some discussion, is found in \cite[3.6, 3.7]{Khan}.
Note any smoothable action is locally linear, but not vice versa; see \cite[VI:9.6]{Bredon_TG}.

\begin{cor}[Elfving]
Let $G$ be a linear Lie group.
Let $M$ be a locally linear $G$-manifold with Palais action.
If $M$ has only finitely many orbit types, then $M$ has the equivariant homotopy type of a $G$-CW complex.
\end{cor}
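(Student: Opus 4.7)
The plan is to derive this directly from Corollary~\ref{cor:main}. The only thing that needs verification is that any locally linear $G$-manifold $M$ with Palais action qualifies as a topological $G$-manifold in the sense of \cite[2.2]{Khan}, i.e., that the fixed set $M^H$ is a topological manifold for every closed subgroup $H \leq G$.

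For the verification, I would begin from local linearity, which furnishes around each point $x \in M$ a saturated open neighborhood $G$-homeomorphic to a twisted product $G \times_{G_x} V$, where $V$ is an orthogonal representation of the (compact, by Palais-properness) isotropy group $G_x$. A routine identification of fixed sets of such twisted products expresses $(G \times_{G_x} V)^H$ as a disjoint union, indexed by $G_x$-conjugacy classes of subgroups $K \leq G_x$ that are $G$-conjugate to $H$, with each piece locally homeomorphic to an open subset of the linear subspace $V^K \subseteq V$. In particular, $(G \times_{G_x} V)^H$ is locally Euclidean, whence $M^H$ is locally Euclidean. Since $M$ is separable and metrizable, so is $M^H$; hence $M^H$ is a topological manifold.

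With this reduction in hand, Corollary~\ref{cor:main} applies immediately and yields that $M$ has the $G$-homotopy type of a countable proper $G$-CW complex, which is in particular a $G$-CW complex. Notice that the finite-orbit-types hypothesis in Elfving's original formulation plays no role in this argument: Theorem~\ref{thm:BredonFloyd} already supplies such finiteness on compact sets from Palais-properness alone, and this conclusion is never separately invoked in our chain of reductions. I do not foresee any substantive obstacle; the proof amounts to checking that the classical hypothesis of local linearity fits the paper's broader notion of topological $G$-manifold used in Corollary~\ref{cor:main}.
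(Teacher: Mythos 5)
Your proposal is correct and matches the paper's proof, which simply observes that this is a special case of Corollary~\ref{cor:main}; your extra verification that local linearity makes each $M^H$ a topological manifold (via the fixed sets of tubes $G\times_{G_x}V$) is the standard justification implicit in the paper's appeal to \cite[3.6, 3.7]{Khan}. Your observation that the finite-orbit-types hypothesis is not needed is also consistent with the paper, which keeps it only to match Elfving's original statement.
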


\begin{proof}
This special case now follows immediately from Corollary~\ref{cor:main}.
\end{proof}

\section{Examples that are not locally linear}

We continue the three families of uncountable examples of \cite[3.1, 3.2, 3.3]{Khan}.
The purpose here is to show there do exist topological $G$-manifolds that are not locally linear when $G$ is a noncompact linear Lie group with torsion.
(All principal bundles are trivial if $G$ is connected torsionfree, such as $G=\R$ for complete flows.)

Their common trick is that the diagonal action will become Palais \cite[1.3.3]{Palais}, even though it is not on the first factor, using a homogenous space $G/H$ with $H$ compact for the second factor.
These $G/H$ are exactly those with transitive Palais $G$-action.
The transitivity on the second factor guarantees the same quotient space as the first's.
Any $C^1$ Palais action by a Lie group is $C^\omega$ \cite{Illman_smoothing, Illman_verystrong}; ours are $C^0$.

Indeed there is no contradiction to Palais' slice theorem \cite[2.3.1, 2.1.2]{Palais}.
There does exist a $G_x$-slice for each point $x$ of the Palais $G$-manifolds, but \emph{not all the slices are euclidean}, and this is why in particular these slices are not $G_x$-linear.

\begin{exm}[Bing]\label{exm:Bing}
Consider the double $D := E \cup_A E$ of the non-simply connected side $E$ in $S^3$ of the Alexander horned sphere $A \approx S^2$, whose embedding is \emph{not locally flat}.
This double has obvious involution $r_B$ that interchanges the two pieces and leaves the horned sphere fixed pointwise.
Bing showed $D$ is homeomorphic to $S^3$ \cite{Bing}.
Thus $r_B$ minus a fixed point (so on $\R^3$) negatively answers a question of Montgomery \cite[39b]{Eilenberg}, asking if the action is conjugate to an isometric one.

Consider the Lie group $G = \Isom(\R) = \R \rtimes_{-1} O_1$, a closed subgroup of $GL_2(\R)$:
\[
\left\langle \begin{pmatrix}e^t & 0\\ 0 & e^{-t}\end{pmatrix}, \begin{pmatrix}0 & 1\\ 1 & 0 \end{pmatrix} ~\bigg|~ t \in \R \right\rangle.
\]
Define a non-Cartan action of $G$ on $S^3$ by epimorphism to $O_1 \iso \langle r_B \rangle \leqslant \Homeo(S^3)$.
As noted above, the diagonal action of $G$ on the product of $S^3$ and the homogenous space $\R = G/O_1$ is Palais.
Then Corollary~\ref{cor:main} applies to the topological $G$-manifold $S^3 \x \R$.
In the orbit space $(S^3 \x \R)/G = S^3/r_B = E$, the stratum $A$ is not locally cofibrant, so the $C^0$ action of $G$ on the 4-manifold $S^3 \x \R$ cannot be locally linear.

For each $n \geqslant 3$, Lininger \cite[9, 10]{Lininger2} applies \cite{Bing2} to produce uncountably many inequivalent involutions on $S^n$ with fixed set an $(n-1)$-sphere and quotient not a manifold-with-boundary, so none is equivalent to a locally linear action.
They arise from uncountably inequivalent embeddings in $S^{n-1}$ of Cantor's space $2^{\N}$; in the form of multiparameter Antoine necklaces, the $n=4$ case is due to Sher \cite{Sher}.
\end{exm}

\begin{exm}[Montgomery--Zippin]
Adaptation of Bing's 1952 idea produces an involution $r_{MZ}$ of $S^3$ whose fixed set is an \emph{embedded circle} $K$ that is not locally flat \cite[\S2]{MZ}.
In Example~\ref{exm:Bing}, replacing $r_B$ and $A$ with $r_{MZ}$ and $K$ works verbatim.
Note $r_{MZ}$ preserves orientation and was first to negatively answer the $C^0$ version of a question of Paul~A~Smith \cite[36]{Eilenberg}, asking if the fixed circle is unknotted.

Alford gave uncountably many inequivalent involutions fixing a wild circle \cite{Alford}.

Higher \emph{codimension-two} examples are provided by Lininger.  He uses rotation of the Alexander horned sphere $A$ in 4-space to obtain a semifree $U_1$-action on $S^4$ with fixed set a 2-sphere \cite[7]{Lininger2}.
More generally, using Bing's later techniques \cite{Bing2}, he obtains uncountably many inequivalent semifree $U_1$-actions on $S^n$ whose fixed set is an $(n-2)$-sphere and quotient not a manifold-with-boundary \cite[8, 10]{Lininger2}.
\end{exm}

\begin{exm}[Lininger]
For each $k \geqslant 3$, there are uncountably many inequivalent \emph{free} $U_1$-actions on $S^{2k-1}$ whose quotients are not $C^0$ manifolds \cite[Remark 2]{Lininger}.
At the root of Lininger's work are Andrews--Curtis decomposition spaces \cite{AC}:
non-euclidean quotients $Q$ by a wild arc, any of whose product with $\R$ is euclidean.

Consider the Lie group $G=\Isom^+(\C)=\C\rtimes U_1$, a closed subgroup of $GL_2(\C)$:
\[
\left\langle \begin{pmatrix}1 & c\\ 0 & 1\end{pmatrix}, \begin{pmatrix}u & 0\\ 0 & 1 \end{pmatrix} ~\bigg|~ c \in \C, \; u \in U_1 \right\rangle.
\]
Define a non-Cartan action of $G$ on $S^{2k-1}$ by epimorphism to $U_1$ then use Lininger.
The diagonal action of $G$ on the product of $S^{2k-1}$ and homogeneous space $\C = G/U_1$ is Palais, as well as free.
The orbit space $(S^{2k-1} \times \C)/G = S^{2k-1}/U_1$ is not a topological manifold, though the projection from $S^{2k-1} \times \C$ is a principal $G$-bundle.
In particular, none in this uncountable family of free $G$-actions can be locally linear.

The same holds for $G = U_1 \x G'$ with $G'$ a linear Lie group and $M=S^{2k-1} \x G'$.
\end{exm}

We end with a family of examples whose linear Lie group $G$ is \emph{arbitrarily large}.

\begin{exm}[Lininger]
For each $n>k+1 \geqslant 3$, there are uncountably many inequivalent semifree $SO_k$-actions on $S^n$ whose fixed set is a wild $(n-k-1)$-sphere \cite[11]{Lininger2}.
Again, the construction arises from the quotient by any wild arc \cite{AC}.

The Lie group $G = \Isom^+(\R^k) = \R^k \rtimes SO_k$ is a closed subgroup of $GL_{2k}(\R)$:
\[
\left\langle \begin{pmatrix}1 & t\\ 0 & 1\end{pmatrix}, \begin{pmatrix}r & 0\\ 0 & 1 \end{pmatrix} ~\bigg|~ t \in \R^k, \; r \in SO_k \right\rangle.
\]
Define a non-Cartan action of $G$ on $S^n$ by epimorphism to $SO_k$ then use Lininger.
The diagonal action of $G$ on the product of $S^n$ and homogeneous space $\R^k = G/SO_k$ is Palais.
The orbit space $(S^n \times \R^k)/G = S^n/SO_k$ minus the singular set is not a manifold, so none in this uncountable family of semifree $G$-actions is locally linear.
\end{exm}

\subsection*{Acknowledgements}
I am grateful to Christopher Connell for various basic discussions and to Ian Hambleton for positing the avoidance of handlebody structures.

\bibliographystyle{alpha}
\bibliography{CountableApproximation_linearG}

\begin{thebibliography}{AAMV17}

\bibitem[AAM14]{AAM}
Natella Antonyan, Serge\u{\i} Antonyan, and Elena Mart{\'i}n{-}Peinador.
\newblock Equivariant embeddings of metrizable proper {$G$}-spaces.
\newblock {\em Topology Appl.}, 163:11--24, 2014.

\bibitem[AAMV17]{AAMV}
Natella Antonyan, Serge\u{\i} Antonyan, Armando Mata{-}Romero, and Enrique
  Vargas{-}Betancourt.
\newblock A characterization of {$G$}-{ANR} and {$G$}-{AR} spaces for proper
  actions of {L}ie groups.
\newblock {\em Topology Appl.}, 231:292--305, 2017.

\bibitem[AAR09]{AAR}
Natella Antonyan, Serge\u{\i} Antonyan, and Leonardo Rodr{\'i}guez{-}Medina.
\newblock Linearization of proper group actions.
\newblock {\em Topology Appl.}, 156(11):1946--1956, 2009.

\bibitem[AC62]{AC}
J~J Andrews and M~L Curtis.
\newblock {$n$}-space modulo an arc.
\newblock {\em Ann. of Math. (2)}, 75:1--7, 1962.

\bibitem[AE09]{AE2}
Serge\u{\i}~A Antonyan and Erik Elfving.
\newblock The equivariant homotopy type of {$G$}-{ANR}s for proper actions of
  locally compact groups.
\newblock In {\em Algebraic topology---old and new}, number~85 in Banach Center
  Publ., pages 155--178. Polish Acad. Sci. Inst. Math., Warsaw, 2009.

\bibitem[Alf66]{Alford}
William~R Alford.
\newblock Uncountably many different involutions of {$S^{3}$}.
\newblock {\em Proc. Amer. Math. Soc.}, 17:186--196, 1966.

\bibitem[Ant05]{Antonyan2}
Serge\u{\i} Antonyan.
\newblock Orbit spaces and unions of equivariant absolute neighborhood
  extensors.
\newblock {\em Topology Appl.}, 146/147:289--315, 2005.

\bibitem[Bin52]{Bing}
R~H Bing.
\newblock A homeomorphism between the {$3$}-sphere and the sum of two solid
  horned spheres.
\newblock {\em Ann. of Math. (2)}, 56:354--362, 1952.

\bibitem[Bin64]{Bing2}
R~H Bing.
\newblock Inequivalent families of periodic homeomorphisms of {$E^{3}$}.
\newblock {\em Ann. of Math. (2)}, 80:78--93, 1964.

\bibitem[Bor60]{Borel_seminar}
Armand Borel.
\newblock {\em Seminar on {T}ransformation {G}roups}.
\newblock Number~46 in Annals of Mathematics Studies. Princeton University
  Press, Princeton, 1960.
\newblock With contributions by G~Bredon, E~E~Floyd, D~Montgomery, R~Palais.

\bibitem[Bou61]{Bourbaki}
Nicolas Bourbaki.
\newblock {\em {S}tructures topologiques}.
\newblock Number~1 in \'{E}l\'{e}ments de math\'{e}matique: {T}opologie
  g\'{e}n\'{e}rale. Actualit\'{e}s Sci. Indust. \#1142, Paris, third (recast)
  edition, 1961.

\bibitem[Bre72]{Bredon_TG}
Glen~E Bredon.
\newblock {\em Introduction to {C}ompact {T}ransformation {G}roups}.
\newblock Number~46 in Pure and Applied Mathematics. Academic Press, New York
  and London, 1972.

\bibitem[DL98]{DL1}
James~F Davis and Wolfgang L{\"u}ck.
\newblock Spaces over a category and assembly maps in isomorphism conjectures
  in {$K$}- and {$L$}-theory.
\newblock {\em $K$-Theory}, 15(3):201--252, 1998.

\bibitem[Dow52]{Dowker4}
Clifford~H Dowker.
\newblock Homology groups of relations.
\newblock {\em Ann. of Math. (2)}, 56:84--95, 1952.

\bibitem[Eil49]{Eilenberg}
Samuel Eilenberg.
\newblock On the problems of topology.
\newblock {\em Ann. of Math. (2)}, 50:247--260, 1949.

\bibitem[Elf96]{Elfving1}
Erik G~J Elfving.
\newblock The {$G$}-homotopy type of proper locally linear {$G$}-manifolds.
\newblock {\em Ann. Acad. Sci. Fenn. Math. Diss.}, (108):50, 1996.

\bibitem[Elf01]{Elfving2}
Erik G~J Elfving.
\newblock The {$G$}-homotopy type of proper locally linear {$G$}-manifolds,
  {II}.
\newblock {\em Manuscripta Math.}, 105(2):235--251, 2001.

\bibitem[Hu65]{Hu}
Sze-Tsen Hu.
\newblock {\em Theory of {R}etracts}.
\newblock Wayne State University Press, Detroit, 1965.

\bibitem[Ill95]{Illman_smoothing}
S{\"o}ren Illman.
\newblock Every proper smooth action of a {L}ie group is equivalent to a real
  analytic action: a contribution to {H}ilbert's fifth problem.
\newblock In {\em Prospects in topology ({P}rinceton, {NJ}, 1994)}, volume 138
  of {\em Ann. of Math. Stud.}, pages 189--220. Princeton Univ. Press,
  Princeton, NJ, 1995.

\bibitem[Ill00]{Illman_Lie}
S{\"o}ren Illman.
\newblock Existence and uniqueness of equivariant triangulations of smooth
  proper {$G$}-manifolds with some applications to equivariant {W}hitehead
  torsion.
\newblock {\em J. Reine Angew. Math.}, 524:129--183, 2000.

\bibitem[Ill03]{Illman_verystrong}
S{\"o}ren Illman.
\newblock The very-strong {$C^\infty$} topology on {$C^\infty(M,N)$} and
  {$K$}-equivariant maps.
\newblock {\em Osaka J. Math.}, 40(2):409--428, 2003.

\bibitem[Jaw81]{Jaworowski2}
Jan~W Jaworowski.
\newblock An equivariant extension theorem and {$G$}-retracts with a finite
  structure.
\newblock {\em Manuscripta Math.}, 35(3):323--329, 1981.

\bibitem[Kha18]{Khan}
Qayum Khan.
\newblock Countable approximation of topological {$G$}-manifolds: compact {L}ie
  groups {$G$}.
\newblock {\em Topology Appl.}, 235:14--21, 2018.

\bibitem[Lee13]{Lee}
John~M Lee.
\newblock {\em Introduction to {S}mooth {M}anifolds}.
\newblock Number 218 in Graduate Texts in Mathematics. Springer, New York,
  second edition, 2013.

\bibitem[Lin69]{Lininger}
Lloyd~L Lininger.
\newblock On topological transformation groups.
\newblock {\em Proc. Amer. Math. Soc.}, 20:191--192, 1969.

\bibitem[Lin70]{Lininger2}
Lloyd~L Lininger.
\newblock Actions on {$S^{n}$}.
\newblock {\em Topology}, 9:301--308, 1970.

\bibitem[Mat71]{Matumoto_GCW}
Takao Matumoto.
\newblock On {$G$}-{${\rm CW}$} complexes and a theorem of {J\,H\,C}
  {W}hitehead.
\newblock {\em J. Fac. Sci. Univ. Tokyo Sect. IA Math.}, 18:363--374, 1971.

\bibitem[Mun00]{Munkres}
James~R Munkres.
\newblock {\em Topology}.
\newblock Prentice Hall, Upper Saddle River, 2000.

\bibitem[MZ54]{MZ}
Deane Montgomery and Leo Zippin.
\newblock Examples of transformation groups.
\newblock {\em Proc. Amer. Math. Soc.}, 5:460--465, 1954.

\bibitem[Pal61]{Palais}
Richard~S Palais.
\newblock On the existence of slices for actions of non-compact {L}ie groups.
\newblock {\em Ann. of Math. (2)}, 73:295--323, 1961.

\bibitem[She68]{Sher}
Richard~B Sher.
\newblock Concerning wild {C}antor sets in {$E^{3}$}.
\newblock {\em Proc. Amer. Math. Soc.}, 19:1195--1200, 1968.

\bibitem[Sto48]{Stone2}
Marshall~H Stone.
\newblock The generalized {W}eierstrass approximation theorem.
\newblock {\em Math. Mag.}, 21:167--184, 237--254, 1948.

\bibitem[tD87]{tomDieck}
Tammo tom Dieck.
\newblock {\em Transformation {G}roups}.
\newblock Number~8 in De Gruyter Studies in Mathematics. Walter de Gruyter,
  Berlin, 1987.

\end{thebibliography}

\end{document}